\newtheorem{theorem}{Theorem}[section]
\newtheorem{definition}{Definition}[section] 
\newtheorem{example}{Example}[section]
\newtheorem{proposition}{Proposition}[section]
\newtheorem{corollary}{Corollary}[section]
\DeclareMathOperator{\con}{\text{\rm co}}
\DeclareMathOperator{\sol}{\text{\rm sol}}
\DeclareMathOperator{\convo}{\xrightarrow[]{o}}
\DeclareMathOperator{\convoo}{\xrightarrow[]{o_1}}
\DeclareMathOperator{\convco}{\xrightarrow[]{c-o}}
\DeclareMathOperator{\convuo}{\xrightarrow[]{uo}}
\DeclareMathOperator{\convcuo}{\xrightarrow[]{c-uo}}
\DeclareMathOperator{\convr}{\xrightarrow[]{ru}}
\DeclareMathOperator{\convcr}{\xrightarrow[]{c-ru}}
\renewcommand{\subsection}{\@startsection{subsection}{1}
{0pt}{3.25ex plus 1ex minus.2ex}{-1em}{\normalfont\normalsize\bf}}
\begin{document}

\title{{\bf Collective order convergence and collectively qualified sets of operators}}
\maketitle
\author{\centering{{Eduard Emelyanov$^{1}$\\ 
\small $1$ Sobolev Institute of Mathematics, Novosibirsk, Russia}

\abstract{Collective versions of order convergences and corresponding types 
of collectively qualified sets of operators in vector lattices are investigated.
It is proved that collectively order to norm bounded sets are bounded in the operator norm
and collectively order continuous sets are collectively order bounded.}

\bigskip
{\bf{Keywords:}} 
{\rm vector lattice, order convergence, collectively qualified set of operators}\\

{\bf MSC2020:} {\rm 46A40, 46B42, 47B60}
\large


\section{Introduction}

\hspace{4mm}
The study of collectively compact sets of operators between normed
spaces was initiated by Anselone and Palmer \cite{AP1968}
(for recent advances see \cite{E2025A,E2025B,G2024}).
In many cases it is necessary to deal with ``uniform'' or ``collective" properties 
of a set of operators between vector lattices (shortly, VLs) regarding 
the order convergence in their domains. Whereas the corresponding technique 
is rather simple for the norm convergence, the case of order convergence 
in VLs requires an additional attention. The present note is devoted to investigation 
of collective order convergence and its applications to collectively qualified sets of operators in VLs.

We abbreviate a normed space (normed lattice, Banach lattice) by NS (NL, BL). 
Throughout the text, vector spaces are real, operators are linear, 
symbol ${\cal L}(X,Y)$ stands for the space of operators between vector spaces $X$ to $Y$, symbol $x_\alpha\downarrow 0$ 
for a decreasing net in a VL such that $\inf\limits_\alpha x_\alpha=0$, symbol
$\sol(A)$ for the solid hull $\bigcup_{a\in A}[-|a|,|a|]$ of a subset $A$ of a VL $E$, 
and ${\cal L}(E,F)$ (${\cal L}_+(E,F)$, ${\cal L}_{ob}(E,F)$, ${\cal L}_{oc}(E,F)$) for
the space of linear (resp., positive, order bounded, order continuous) operators between VLs $E$ and $F$. 

We shall use the following modes of convergence in vector lattices.

\begin{definition}\label{order convergence and co} 
{\em
A net $(x_\alpha)$ in a vector lattice $E$
\begin{enumerate}[$a)$]
\item\ 
{\em order converges} to $x\in E$ 
(briefly, $x_\alpha\convo x$) 
if there exists a net $(g_\beta)$ in $E$ such that 
$g_\beta\downarrow 0$ and, for each $\beta$, 
there is an $\alpha_\beta$ with $|x_\alpha-x|\le g_\beta$ for
all $\alpha\ge\alpha_\beta$. 
\item\ 
{\em unbounded order converges} to $x\in E$ 
(briefly, $x_\alpha\convuo x$) 
if $|x_\alpha-x|\wedge u\convo 0$
for every $u\in E_+$. 
\item\ 
{\em relative uniform converges} to $x\in E$ 
(briefly, $x_\alpha\convr x$) 
if, for some $u\in E_+$ there exists an increasing sequence 
$(\alpha_n)$ of indices such that $|x_\alpha-x|\le\frac{1}{n}u$ for
all $\alpha\ge\alpha_n$. 
\end{enumerate}
An operator $T$ from a VL $E$ to a VL $F$ is called \text{\rm o}-(\text{\rm uo}-, \text{\rm ru}-)-{\em continuous} if
$Tx_\alpha\convo 0$ ($Tx_\alpha\convuo 0$, $Tx_\alpha\convr 0$) whenever $x_\alpha\convo 0$ (resp., $x_\alpha\convuo 0$, $x_\alpha\convr 0$).
}
\end{definition}

The following notions of  ``collective convergence'' are useful
in working with families of nets indexed by the same directed set.

\begin{definition}\label{COC and co}
{\em
Let ${\cal B}=\big\{\big(x^b_\alpha\big)_{\alpha\in A}\big\}_{b\in B}$ be a set of nets indexed by a directed set $A$ in a vector lattice $E$.
We say that ${\cal B}$ 
\begin{enumerate}[$a)$]
\item\ 
{\em collective \text{\rm o}-converges} to $0$ 
(briefly, ${\cal B}\convco 0$) 
if there exists a net $g_\beta\downarrow 0$ such that, for each $\beta$, 
there is an $\alpha_\beta$ with $|x^b_\alpha|\le g_\beta$ for
$\alpha\ge\alpha_\beta$ and $b\in B$. 
\item\ 
{\em collective \text{\rm uo}-converges} to $0$ 
(briefly, ${\cal B}\convcuo 0$) if \\
$\big\{\big(|x^b_\alpha|\wedge u\big)_{\alpha\in A}\big\}_{b\in B}\convco 0$
for every $u\in E_+$. 
\item\ 
{\em collective \text{\rm ru}-converges} to $0$ 
(briefly, ${\cal B}\convcr 0$)
if, for some $u\in E_+$, there exists an increasing sequence 
$(\alpha_n)$ of indices such that $|x^b_\alpha|\le\frac{1}{n}u$ for
$\alpha\ge\alpha_n$ and $b\in B$. 
\end{enumerate}}
\end{definition}

For the basic theory of VLs we refer to \cite{AB2003,AB2006,Kus2000,Mey1991}.

\section{Main results}

\subsection{\large Preliminaries.}
We begin with the following auxiliary properties of c-o, c-uo, and c-ru convergences.

\begin{proposition}\label{elem lemma}
Let ${\cal B}=\big\{\big(x^b_\alpha\big)\big\}_{b\in B}$ and
${\cal C}=\big\{\big(x^c_\alpha\big)\big\}_{c\in C}$ be nonempty 
sets of nets in a vector lattice $E$ indexed by the same directed set $A$,
and let $r\in\mathbb{R}$. The following holds.
\begin{enumerate}[$i)$]
\item
If ${\cal B}\convco 0$ and ${\cal C}\convco 0$ then 
$r\mathcal{B}=\big\{\big(rx^b_\alpha\big)\big\}_{\alpha\in A;b\in B}\convco 0$,
$$
   \sol(\mathcal{B}):=\big\{(x_\alpha): 
   (\exists b\in B)(\forall\alpha\in A)|x_\alpha|\le|x^b_\alpha|\big\}\convco 0,
$$
$\mathcal{B}\cup\mathcal{C}\convco 0$, 
$\mathcal{B}+\mathcal{C}=\big\{\big(x^b_\alpha+x^c_\alpha\big): b\in B, c\in C\big\}\convco 0$,
and $\con(\mathcal{B})\convco 0$, where
$$
   \con(\mathcal{B})=\Big\{(x_\alpha)=\Big(\sum_{k=1}^n r_kx^{b_k}_\alpha\Big):\ 
   b_k\in B,  r_k\in\mathbb{R}_+, \sum_{k=1}^n r_k=1\Big\}.
$$
\item
If ${\cal B}\convcuo 0$ and ${\cal C}\convcuo 0$ then 
$r\mathcal{B}\convcuo 0$, $\sol(\mathcal{B})\convcuo 0$, 
$\mathcal{B}\cup\mathcal{C}\convcuo 0$, 
$\mathcal{B}+\mathcal{C}\convcuo 0$, and
$\con(\mathcal{B})\convcuo 0$.
\item
If ${\cal B}\convcr 0$ and ${\cal C}\convcr 0$ then 
$r\mathcal{B}\convcr 0$, $\sol(\mathcal{B})\convcr 0$, 
$\mathcal{B}\cup\mathcal{C}\convcr 0$, 
$\mathcal{B}+\mathcal{C}\convcr 0$, and
$\con(\mathcal{B})\convcr 0$.
\end{enumerate}
Moreover, $x_\alpha\convo 0\Longleftrightarrow\{(x_\alpha)\}\convco 0$;
$x_\alpha\convuo 0\Longleftrightarrow\{(x_\alpha)\}\convcuo 0$; and $x_\alpha\convr 0\Longleftrightarrow\{(x_\alpha)\}\convcr 0$.
\end{proposition}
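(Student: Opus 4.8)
The plan is to prove $i)$ directly from the definition of $\convco$, then obtain $ii)$ by reducing each assertion to $i)$ applied to the truncated families, obtain $iii)$ by the same elementary domination argument carried out with a single regulator, and finally read off the \emph{Moreover} equivalences by specializing to a singleton family $B=\{b\}$.

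For $i)$, fix witnessing nets $g_\beta\downarrow 0$ for $\mathcal{B}$ and $h_\gamma\downarrow 0$ for $\mathcal{C}$, with thresholds $\alpha_\beta$ and $\alpha'_\gamma$. The scaling, solid-hull and convex-hull claims need only the single net $g_\beta$: for scaling use $|rx^b_\alpha|\le|r|g_\beta$ and $|r|g_\beta\downarrow 0$ (the case $r=0$ being trivial); for the solid hull use $|x_\alpha|\le|x^b_\alpha|\le g_\beta$; and for the convex hull use $|\sum_k r_kx^{b_k}_\alpha|\le\sum_k r_k|x^{b_k}_\alpha|\le(\sum_k r_k)g_\beta=g_\beta$, where the last equality is exactly the point at which $\sum_k r_k=1$ enters, so that no dependence on the number of terms survives. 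For $\mathcal{B}\cup\mathcal{C}$ and $\mathcal{B}+\mathcal{C}$ I first record the elementary fact that, over the product directed set, $w_{(\beta,\gamma)}:=g_\beta+h_\gamma\downarrow 0$: monotonicity is clear, and $\inf_{(\beta,\gamma)}w_{(\beta,\gamma)}=0$ follows by fixing $\gamma$ and letting $\beta$ run (any lower bound $z$ then satisfies $z\le h_\gamma$), then letting $\gamma$ run. Choosing $\tilde\alpha_{(\beta,\gamma)}\ge\alpha_\beta,\alpha'_\gamma$ dominates every net of $\mathcal{B}\cup\mathcal{C}$ by $w_{(\beta,\gamma)}$, and $|x^b_\alpha+x^c_\alpha|\le g_\beta+h_\gamma$ settles the sum.

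For $ii)$ it suffices to show, for each $u\in E_+$, that $\{(|x^b_\alpha|\wedge u)\}_{b\in B}\convco 0$ for the relevant family. Scaling follows by applying $i)$ to $|r|\cdot(|x^b_\alpha|\wedge(u/|r|))=|rx^b_\alpha|\wedge u$; the solid hull, union and sum reduce to the corresponding parts of $i)$ for the truncated families, using $|x_\alpha|\wedge u\le|x^b_\alpha|\wedge u$ and the standard inequality $|x^b_\alpha+x^c_\alpha|\wedge u\le(|x^b_\alpha|\wedge u)+(|x^c_\alpha|\wedge u)$ (the sum landing in the solid hull of the sum of the truncated families). The convex hull is the genuinely delicate point and the main obstacle: truncation $t\mapsto t\wedge u$ is concave, so a convex combination of truncations bounds $(\sum_k r_k|x^{b_k}_\alpha|)\wedge u$ from below, not above, and the naive estimate fails. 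The remedy is to pass to a supremum: since $\sum_k r_k=1$ gives $\sum_k r_k|x^{b_k}_\alpha|\le\bigvee_k|x^{b_k}_\alpha|$, the distributive law yields
$$
   \Big(\sum_k r_k|x^{b_k}_\alpha|\Big)\wedge u\le\Big(\bigvee_k|x^{b_k}_\alpha|\Big)\wedge u=\bigvee_k\big(|x^{b_k}_\alpha|\wedge u\big)\le g^u_\beta
   \qquad(\alpha\ge\alpha_\beta),
$$
where $g^u_\beta\downarrow 0$ witnesses $\mathcal{B}\convcuo 0$ at $u$. The final bound is a single term, independent of the number of summands and of the coefficients, so it dominates the entire convex hull simultaneously.

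Part $iii)$ is the most transparent, since a single regulator governs everything: scaling, solid hull and convex hull keep the regulator $u$ and the sequence $(\alpha_n)$ unchanged (again $\sum_k r_k=1$ collapses the convex estimate to $\tfrac1n u$), while for $\mathcal{B}\cup\mathcal{C}$ and $\mathcal{B}+\mathcal{C}$ I take the regulator $u+w$ and interleave the two index sequences into one increasing sequence $\alpha''_n\ge\alpha_n,\alpha'_n,\alpha''_{n-1}$. Finally, the \emph{Moreover} equivalences are immediate: a singleton $B=\{b\}$ turns each collective definition verbatim into the corresponding ordinary one (for $\convuo$ via the $\convo$ case applied to the net $|x_\alpha|\wedge u$), so nothing beyond unwinding the definitions is needed.
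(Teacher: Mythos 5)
Your proof is correct, and for parts $i)$, $iii)$ and the final equivalences it runs essentially along the paper's own lines: the same product-indexed witness net $g_\beta+h_\gamma$ with thresholds above both $\alpha_\beta$ and $\alpha'_\gamma$ for unions and sums, the same one-line estimates for scaling, solid hulls and convex hulls, and the same unwinding of definitions for singleton families. Where you genuinely depart from the paper is part $ii)$: the paper disposes of it with the single sentence ``It is a direct consequence of $i)$'', which is indeed adequate for scaling, solid hull, union and sum (exactly via the reductions to truncated families that you spell out), but it glosses over the convex hull. As you observe, truncation $t\mapsto t\wedge u$ is concave, so $\big(\sum_k r_k|x^{b_k}_\alpha|\big)\wedge u$ is \emph{not} dominated by $\sum_k r_k\big(|x^{b_k}_\alpha|\wedge u\big)$, and the subadditive estimate only yields a bound of the form $n\,g^u_\beta$ that grows with the number of summands, so $i)$ cannot be invoked verbatim. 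Your remedy --- using $\sum_k r_k|x^{b_k}_\alpha|\le\bigvee_k|x^{b_k}_\alpha|$ together with the distributive law $\big(\bigvee_k a_k\big)\wedge u=\bigvee_k(a_k\wedge u)$ to get the single bound $g^u_\beta$ independent of $n$ and of the coefficients --- is precisely the observation needed to complete the paper's one-line argument; equivalently, c-o convergence is preserved under finite lattice suprema with the same witness net, an operation the paper's list in $i)$ does not contain. Two small points: in $iii)$, for scaling the regulator should be $|r|u$ rather than $u$ (or keep $u$ and pass to later indices), so ``keep the regulator unchanged'' is literally false though trivially repaired; and your interleaving condition $\alpha''_n\ge\alpha_n,\alpha'_n,\alpha''_{n-1}$ is a nice improvement --- the paper's choice of indices $\alpha_n\ge\alpha'_n,\alpha''_n$ need not be increasing, which the definition of c-ru convergence formally requires.
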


\begin{proof}
$i)$\
Let ${\cal B}\convco 0$ and ${\cal C}\convco 0$. Take two nets
$g_\beta\downarrow 0$ and $p_\gamma\downarrow 0$ in $E$ such that, for every 
$\beta$ and $\gamma$, there exist indices $\alpha_\beta$ and $\alpha_\gamma$ satisfying
$|x^b_\alpha|\le g_\beta$ for $\alpha\ge\alpha_\beta$ and $b\in B$,
and $|x^c_\alpha|\le p_\gamma$ for $\alpha\ge\alpha_\gamma$ and $c\in C$.

Obviously, $r\mathcal{B}\convco 0$ and $\sol(\mathcal{B})\convco 0$.

In order to show $\mathcal{B}\cup\mathcal{C}\convco 0$, consider the net
$(g_\beta+p_\gamma)_{\beta;\gamma}\downarrow 0$.
Since $\mathcal{B}\cup\mathcal{C}=\big\{\big(x^d_\alpha\big)\big\}_{\alpha\in A;d\in B\cup C}$ 
then $|x^d_\alpha|\le g_\beta+p_\gamma$ for $\alpha\ge\alpha_\beta,\alpha_\gamma$
and $d\in B\cup C$. For every $\beta$ and $\gamma$ find $\alpha_{(\beta,\gamma)}\ge\alpha_\beta,\alpha_\gamma$.
Then $|x^d_\alpha|\le g_\beta+p_\gamma$ for $\alpha\ge\alpha_{(\beta,\gamma)}$
and $d\in B\cup C$ as desired.

For proving $\mathcal{B}+\mathcal{C}\convco 0$, consider
$(g_\beta+p_\gamma)_{\beta;\gamma}\downarrow 0$,
and pick $\alpha_{(\beta,\gamma)}\ge\alpha_\beta,\alpha_\gamma$.
It follows $|x^b_\alpha+x^c_\alpha|\le g_\beta+p_\gamma$ for $\alpha\ge\alpha_{(\beta,\gamma)}$, $b\in B$, and $c\in C$ as desired. 

For proving $\con(\mathcal{B})\convco 0$, let $b_1,...,b_n\in B$ and $r_1,...,r_n\in\mathbb{R}_+$ such that $\sum\limits_{k=1}^n r_k=1$. Since, 
for $\alpha\ge\alpha_\beta$ we have
$$
   \Big|\sum_{k=1}^n r_kx^{b_k}_\alpha\Big|\le
   \sum_{k=1}^n r_k|x^{b_k}_\alpha|\le\sum_{k=1}^n r_kg_\beta=g_\beta,
$$
it follows $\con(\mathcal{B})\convco 0$.

\medskip
$ii)$\
It is a direct consequence of $i)$.

\medskip
$iii)$\
Let ${\cal B}\convcr 0$ and ${\cal C}\convcr 0$. 
Obviously, $r\mathcal{B}\convcr 0$ and $\sol(\mathcal{B})\convcr 0$.

Take $u,w\in E_+$ and two increasing sequences 
$(\alpha'_n)$ and $(\alpha''_n)$ of indices such that 
$|x^b_\alpha|\le\frac{1}{n}u$ for $\alpha\ge\alpha'_n$ and $b\in B$,
and $|x^c_\alpha|\le\frac{1}{n}w$ for $\alpha\ge\alpha''_n$ and $c\in C$.
For each $n$ find some $\alpha_n\ge\alpha'_n,\alpha''_n$.
Since $|x^d_\alpha|\le\frac{1}{n}(u+w)$ for $\alpha\ge\alpha_n$ and $d\in B\cup C$ then 
$\mathcal{B}\cup\mathcal{C}\convcr 0$.

Since $|x^b_\alpha+x^c_\alpha|\le\frac{1}{n}(u+w)$ for all $\alpha\ge\alpha_n$, 
$b\in B$, and $c\in C$, it follows $\mathcal{B}+\mathcal{C}\convcr 0$.

In order to show $\con(\mathcal{B})\convcr 0$, let 
$b_1,...,b_n\in B$ and $r_1,...,r_n\in\mathbb{R}_+$ satisfy $\sum_{k=1}^nr_k=1$. Since 
$$
   \Big|\sum_{k=1}^n r_kx^{b_k}_\alpha\Big|\le
   \sum_{k=1}^n r_k|x^{b_k}_\alpha|\le\sum_{k=1}^n \frac{r_k}{n}u=\frac{1}{n}u
$$
for all $\alpha\ge\alpha'_n$, we conclude $\con(\mathcal{B})\convcr 0$.

\medskip
The remaining part of the proof is obvious.
\end{proof}

\noindent
Definition\,\ref{COC and co} gives rise to the following notions.

\begin{definition}\label{COCS and co}
{\em
Let ${\cal T}\subseteq{\cal L}(E,F)$.\\ If $E$ and $F$ are vector lattices, the set ${\cal T}$ is
\begin{enumerate}[$a)$]
\item\ 
{\em collectively order continuous} (briefly, ${\cal T}\in\text{\bf L}_{oc}(E,F)$)
if ${\cal T}(x_\alpha)=\{(Tx_\alpha)\}_{T\in{\cal T}}\convco 0$ 
whenever $x_\alpha\convo 0$. 
\item\ 
{\em collectively \text{\rm uo}-continuous} (briefly, ${\cal T}\in\text{\bf L}_{uoc}(E,F)$)
if ${\cal T}(x_\alpha)\convcuo 0$ whenever $x_\alpha\convuo 0$.
\item\ 
{\em collectively \text{\rm ru}-continuous} (briefly, ${\cal T}\in\text{\bf L}_{rc}(E,F)$) if 
${\cal T}(x_\alpha)\convcr 0$ whenever $x_\alpha\convr 0$. 
\item\ 
{\em collectively order bounded} (briefly, ${\cal T}\in\text{\bf L}_{ob}(E,F)$) 
if the set ${\cal T}[0,b]=\{Tx: T\in{\cal T}; x\in[0,b]\}$ is order bounded for every $b\in E_+$.
\end{enumerate}
\hspace{4mm}If $E$ is a vector lattice and $F$ is a normed space, the set ${\cal T}$ is
\begin{enumerate}[]
\item[$e)$]\ 
{\em collectively order to norm bounded} (briefly, ${\cal T}\in\text{\bf L}_{onb}(E,F)$) 
if the set ${\cal T}[0,b]$ is norm bounded for every $b\in E_+$.
\end{enumerate}}
\end{definition}

Clearly, an operator $T$ is order bounded (order to norm bounded, o-, uo-, ru-continuous) iff the set
$\{T\}$ is collectively order bounded (resp., collectively order to norm bounded, o-, uo-, ru-continuous).
The following example shows that collectively o-bounded and/or collectively compact
sets of o-continuous (or, uo-continuous) functionals need not to be o-continuous (uo-continuous) collectively.

\begin{example}\label{example-1}
Consider a sequence $(f_n)$ of fuctionals on $\ell^2$
$$
   f_n\in(\ell^2)': \ \ f_n({\bf x})=x_n, 
$$
for ${\bf x}=\sum_{k=1}^\infty x_k {\bf e}_k\in\ell^2$, where ${\bf e}_k$ is
the k-th unit vector of $\ell^2$. Then $(f_n)$ is a collectively \text{\rm o}-bounded collectively compact 
sequence of \text{\rm o}- and \text{\rm uo}-continuous fuctionals
that is neither \text{\rm o}- nor \text{\rm uo}-continuous collectively.
\end{example}

\subsection{\large On continuity of order to norm bounded operators.}
It is well known that every order bounded operator from a BL to a NL is continuous (cf., \cite[Theorem 1.31]{AA2002}). 
We extend this fact in the following strong collective way.

\begin{theorem}\label{appl1}
Every collectively order to norm bounded set of operators from a Banach lattice to a normed space is bounded in the operator norm.
\end{theorem}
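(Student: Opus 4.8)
The plan is to argue by contradiction using a gliding-hump construction, exploiting the completeness of the domain to manufacture a single order interval that simultaneously witnesses the bad behaviour of the whole family. Let $E$ be a Banach lattice, $F$ a normed space, and ${\cal T}\in\text{\bf L}_{onb}(E,F)$. Suppose, toward a contradiction, that $\sup_{T\in{\cal T}}\|T\|=\infty$. Then for each $n$ I can first choose $T_n\in{\cal T}$ with $\|T_n\|>4^n$ and, by the definition of the operator norm as a supremum over the unit ball, an $x_n\in E$ with $\|x_n\|\le 1$ and $\|T_nx_n\|\ge 4^n$.

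The key step is to assemble all these witnesses into one positive vector. Because $E$ is a normed lattice we have $\||x_n|\|=\|x_n\|\le 1$, so the partial sums $s_N=\sum_{n=1}^N 2^{-n}|x_n|$ form an increasing Cauchy sequence in $E$; by completeness they converge to some $b\in E_+$ (the cone being closed). The crucial property is that $b\ge s_n\ge 2^{-n}|x_n|$ for every $n$, hence $2^{-n}x_n^+\le b$ and $2^{-n}x_n^-\le b$, so both $2^{-n}x_n^+$ and $2^{-n}x_n^-$ lie in $[0,b]$.

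Now collectivity enters decisively. Since ${\cal T}\in\text{\bf L}_{onb}(E,F)$, the set ${\cal T}[0,b]$ is norm bounded, say $\|Ty\|\le M$ for all $T\in{\cal T}$ and all $y\in[0,b]$; note $M$ is a single finite constant governing the entire family, which is precisely what is unavailable in the one-operator setting. Applying this to $T_n$ and to $y=2^{-n}x_n^+\in[0,b]$ and $y=2^{-n}x_n^-\in[0,b]$ gives $\|T_nx_n^+\|\le 2^nM$ and $\|T_nx_n^-\|\le 2^nM$, whence
$$
   \|T_nx_n\|\le\|T_nx_n^+\|+\|T_nx_n^-\|\le 2^{n+1}M .
$$
Combined with $\|T_nx_n\|\ge 4^n$ this forces $4^n\le 2^{n+1}M$, i.e. $2^{n-1}\le M$ for every $n$, which is absurd. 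Hence $\sup_{T\in{\cal T}}\|T\|<\infty$.

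The heart of the argument, and its only genuinely delicate point, is the observation that the series $\sum_n 2^{-n}|x_n|$ produces one element $b$ whose order interval absorbs scaled copies of all the witnesses at once, so that the norm boundedness of ${\cal T}[0,b]$ yields a uniform constant $M$ and the rest reduces to a geometric-growth contradiction. The main obstacle to watch is the bookkeeping of rates: the scaling $2^{-n}$ needed to place the witnesses inside $[0,b]$ costs a factor $2^{n+1}$, so the target growth must outpace it (here $4^n$ against $2^{n+1}$ leaves the blowing-up margin $2^{n-1}$), and one must confirm that $b$ dominates each individual $2^{-n}|x_n|$ rather than merely its tail, and that the fixed $M$ is legitimately extracted \emph{after} the whole sequence $(T_n,x_n)$ and the vector $b$ have been fixed, so no circularity arises.
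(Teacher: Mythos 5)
Your proof is correct and follows essentially the same route as the paper: a contradiction argument in which a norm-convergent series of scaled witnesses (using completeness of $E$ and closedness of the cone) produces a single positive element $b$ whose order interval $[0,b]$ absorbs all the scaled witnesses, so that the uniform bound $M$ on ${\cal T}[0,b]$ contradicts the prescribed growth of $\|T_nx_n\|$. The only cosmetic differences are your geometric rates ($4^n$ vs.\ the paper's $n^3$ with $n^{-2}$ scaling) and that you split $x_n=x_n^+-x_n^-$ at the end, whereas the paper performs the same positive-part reduction at the start by passing from ${\cal T}(B_E)$ to ${\cal T}((B_E)_+)$.
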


\begin{proof}
Let $E$ be a BL, $Y$ a NS, and ${\cal T}\in\text{\bf L}_{onb}(E,Y)$.
Assume in contrary, ${\cal T}$ is not bounded in the operator norm.
Then the set ${\cal T}(B_E)$, and hence the set ${\cal T}((B_E)_+)$ is not norm bounded. Thus, there exist
sequences $(x_n)$ in $(B_E)_+$ and $(T_n)$ in ${\cal T}$ satisfying $\|T_nx_n\|\ge n^3$ for all $n\in\mathbb{N}$.
Take $x:=\|\cdot\|-$$\sum\limits_{n=1}^\infty\frac{x_n}{n^2}\in E_+$.
Since ${\cal T}\in\text{\bf L}_{onb}(E,Y)$, there exists $M\in\mathbb{R}_+$ such that
${\cal T}([0,x])\subseteq MB_Y$.
Since $\frac{x_n}{n^2}\in[0,x]$ then $\|T_n(\frac{x_n}{n^2})\|\le M$ for every $n\in\mathbb{N}$.
We get a contradiction:
$$
   M\ge\Big\|T_n\Big(\frac{x_n}{n^2}\Big)\Big\|\ge n \ \ \ \ (\forall n\in\mathbb{N}).
$$
Therefore, the set ${\cal T}$ is norm bounded.
\end{proof}

\noindent
It is worth noting the same argument combined with the Krein -- Smulian
theorem (cf., \cite[Theorem 2.37]{AT2007}) proves norm boundedness of 
every collective order to norm bounded set of linear operators from an
ordered Banach space with a closed generating cone to a normed space.

\begin{corollary}\label{appl11}
Every linear operator from a Banach lattice to a normed space that maps order intervals to norm bounded sets is bounded.
\end{corollary}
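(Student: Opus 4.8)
The plan is to recognize this statement as precisely the singleton case of Theorem~\ref{appl1}. Let $E$ be a Banach lattice, $Y$ a normed space, and $T\in{\cal L}(E,Y)$ an operator that carries order intervals to norm bounded subsets of $Y$. First I would form the one-element set ${\cal T}:=\{T\}$ and verify that ${\cal T}\in\text{\bf L}_{onb}(E,Y)$, after which the theorem will do all the work.

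To check membership, recall from Definition~\ref{COCS and co}$(e)$ that ${\cal T}\in\text{\bf L}_{onb}(E,Y)$ means that ${\cal T}[0,b]=\{Sx:S\in{\cal T},\,x\in[0,b]\}=T[0,b]$ is norm bounded for every $b\in E_+$. Since each interval $[0,b]$ is itself an order interval, the hypothesis on $T$ immediately gives that $T[0,b]$ is norm bounded, so $\{T\}\in\text{\bf L}_{onb}(E,Y)$. If one starts instead from the phrasing ``order intervals'' in full generality, I would note that any order interval can be written as $[x,y]=x+[0,\,y-x]$ with $y-x\in E_+$, and that translation by the fixed vector $Tx$ preserves norm boundedness; hence the condition for general order intervals is equivalent to the condition for intervals of the form $[0,b]$ that appears in the definition of $\text{\bf L}_{onb}$.

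Having established $\{T\}\in\text{\bf L}_{onb}(E,Y)$, I would invoke Theorem~\ref{appl1}, which asserts that every collectively order to norm bounded set of operators from a Banach lattice to a normed space is bounded in the operator norm. Applied to the singleton ${\cal T}=\{T\}$, this yields $\|T\|=\sup\{\|S\|:S\in{\cal T}\}<\infty$, that is, $T$ is bounded, equivalently continuous.

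There is essentially no obstacle here: the corollary is literally the one-operator instance of the theorem, the notion of a singleton being collectively order to norm bounded coinciding with the notion of an operator mapping order intervals to norm bounded sets. The only point worth stating explicitly is the elementary translation remark above, which reconciles the ``arbitrary order interval'' formulation of the corollary with the ``$[0,b]$'' formulation used in Definition~\ref{COCS and co}.
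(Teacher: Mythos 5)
Your proof is correct and matches the paper's intent exactly: the corollary is stated without proof precisely because it is the singleton case ${\cal T}=\{T\}$ of Theorem~\ref{appl1}, which is the reduction you carry out. Your translation remark reconciling arbitrary order intervals $[x,y]=x+[0,y-x]$ with the $[0,b]$ formulation of Definition~\ref{COCS and co}$(e)$ is a worthwhile, if elementary, detail that the paper leaves implicit.
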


\subsection{\large Further elementary properties of collectively qualified sets.} 

\begin{proposition}\label{appl12a}
Let $E$ and $F$ be vector lattices, and $r_1,r_2\in\mathbb{R}$. 
\begin{enumerate}[$i)$]
\item\
If ${\cal T}_1$ and ${\cal T}_2$ are nonempty subsets of $\text{\bf L}_{ob}(E,F)$, then  
$r_1{\cal T}_1+r_2{\cal T}_2=\{r_1T_1+r_2T_2: T_1\in{\cal T}_1, T_2\in{\cal T}_2\}\in\text{\bf L}_{ob}(E,F)$.
\item\
If ${\cal T}_1$ and ${\cal T}_2$ are nonempty subsets of $\text{\bf L}_{oc}(E,F)$ $($resp., $\text{\bf L}_{uoc}(E,F)$, $\text{\bf L}_{rc}(E,F)$$)$
then ${\cal T}_1\cup{\cal T}_2$ and $r_1{\cal T}_1+r_2{\cal T}_2$ are subsets of 
$\text{\bf L}_{oc}(E,F)$ $($resp., of $\text{\bf L}_{uoc}(E,F)$, $\text{\bf L}_{rc}(E,F)$$)$.
\end{enumerate}
\end{proposition}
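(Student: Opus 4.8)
The plan is to reduce the entire statement to the closure properties of c-o, c-uo, and c-ru convergence already recorded in Proposition \ref{elem lemma}. The crucial bookkeeping observation is that evaluation at a fixed net commutes with the set operations on operators: for any net $(x_\alpha)$ one has the identities $(\mathcal{T}_1\cup\mathcal{T}_2)(x_\alpha)=\mathcal{T}_1(x_\alpha)\cup\mathcal{T}_2(x_\alpha)$ and $(r_1\mathcal{T}_1+r_2\mathcal{T}_2)(x_\alpha)=r_1\mathcal{T}_1(x_\alpha)+r_2\mathcal{T}_2(x_\alpha)$, where the right-hand sides are exactly the set operations on families of nets appearing in Proposition \ref{elem lemma}. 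Once these identities are in place, each assertion in $ii)$ follows mechanically by applying the corresponding scaling/sum/union clause of that proposition.

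For $i)$ I would argue directly from order boundedness rather than through Proposition \ref{elem lemma}. Fix $b\in E_+$. Since $\mathcal{T}_1,\mathcal{T}_2\in\text{\bf L}_{ob}(E,F)$, there exist $y_1,y_2\in F_+$ with $\mathcal{T}_1[0,b]\subseteq[-y_1,y_1]$ and $\mathcal{T}_2[0,b]\subseteq[-y_2,y_2]$. For $T_1\in\mathcal{T}_1$, $T_2\in\mathcal{T}_2$, and $x\in[0,b]$ the estimate $|(r_1T_1+r_2T_2)x|\le|r_1|\,|T_1x|+|r_2|\,|T_2x|\le|r_1|y_1+|r_2|y_2$ shows that $(r_1\mathcal{T}_1+r_2\mathcal{T}_2)[0,b]$ lies in the order interval $[-z,z]$ with $z:=|r_1|y_1+|r_2|y_2$, hence is order bounded. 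As $b\in E_+$ was arbitrary, $r_1\mathcal{T}_1+r_2\mathcal{T}_2\in\text{\bf L}_{ob}(E,F)$.

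For $ii)$ I would treat the three modes uniformly. Suppose first $\mathcal{T}_1,\mathcal{T}_2\in\text{\bf L}_{oc}(E,F)$ and let $x_\alpha\convo 0$. Then $\mathcal{T}_1(x_\alpha)\convco 0$ and $\mathcal{T}_2(x_\alpha)\convco 0$, so by the union and scaling/sum clauses of Proposition \ref{elem lemma}$(i)$ we obtain $\mathcal{T}_1(x_\alpha)\cup\mathcal{T}_2(x_\alpha)\convco 0$ and $r_1\mathcal{T}_1(x_\alpha)+r_2\mathcal{T}_2(x_\alpha)\convco 0$. Rewriting these via the two evaluation identities yields $(\mathcal{T}_1\cup\mathcal{T}_2)(x_\alpha)\convco 0$ and $(r_1\mathcal{T}_1+r_2\mathcal{T}_2)(x_\alpha)\convco 0$, which is precisely the claim for $\text{\bf L}_{oc}$. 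The cases of $\text{\bf L}_{uoc}$ and $\text{\bf L}_{rc}$ are verbatim the same, replacing $\convo$ by $\convuo$ (resp.\ $\convr$), $\convco$ by $\convcuo$ (resp.\ $\convcr$), and citing Proposition \ref{elem lemma}$(ii)$ (resp.\ $(iii)$) in place of $(i)$.

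I do not anticipate a genuine obstacle: the content of the proposition is that set operations on operators are absorbed by the already-established closure properties of collective convergence, and the only points demanding care are the elementary verification of the two evaluation-commutes-with-set-operations identities and the remark that the linear combination $r_1\mathcal{T}_1+r_2\mathcal{T}_2$ is produced by first applying the scaling clause to each family and then the sum clause of Proposition \ref{elem lemma}.
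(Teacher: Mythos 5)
Your proof is correct and follows essentially the same route as the paper, whose entire proof is the single line ``It follows directly from Proposition\,\ref{elem lemma}'': your part $ii)$ is precisely that reduction spelled out, via the two evaluation identities $(\mathcal{T}_1\cup\mathcal{T}_2)(x_\alpha)=\mathcal{T}_1(x_\alpha)\cup\mathcal{T}_2(x_\alpha)$ and $(r_1\mathcal{T}_1+r_2\mathcal{T}_2)(x_\alpha)=r_1\mathcal{T}_1(x_\alpha)+r_2\mathcal{T}_2(x_\alpha)$ combined with the scaling, sum, and union clauses of that proposition. The only divergence is part $i)$, which you prove by a direct order-interval estimate rather than by citing Proposition\,\ref{elem lemma} --- a reasonable choice, since that proposition concerns collective convergence rather than order boundedness, so your direct argument in fact supplies a step the paper's one-line proof leaves implicit.
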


\begin{proof}\
It follows directly from Proposition\,\ref{elem lemma}.
\end{proof}

\noindent
For each element $x$ of a VL $E$, we define the set ${\cal T}_x:=\{y^\sim: |y|\le|x|\}$ of functionals on the order dual $E^\sim$ of $E$,
where $y^\sim(f)=f(y)$ for each $f\in E^\sim={\cal L}_{ob}(E,\mathbb{R})$.

\begin{proposition}\label{appl12b}
Let $E$ be a vector lattice. Then ${\cal T}_x\in\text{\bf L}_{oc}(E^\sim,\mathbb{R})$ for each $x\in E$.
\end{proposition}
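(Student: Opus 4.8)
The plan is to unwind both definitions to a uniform real bound and then reduce the one nontrivial point to the order continuity of point evaluations on $E^\sim$. Fix $x\in E$ and take an arbitrary net $(f_\alpha)$ in $E^\sim$ with $f_\alpha\convo 0$. By Definition~\ref{order convergence and co} there is a net $h_\beta\downarrow 0$ in $E^\sim$ and, for each $\beta$, an index $\alpha_\beta$ with $|f_\alpha|\le h_\beta$ for all $\alpha\ge\alpha_\beta$. Recalling that $\mathcal{T}_x(f_\alpha)=\{(f_\alpha(y)):|y|\le|x|\}$, the task dictated by Definition~\ref{COCS and co} is to exhibit a single net $g_\beta\downarrow 0$ in $\mathbb{R}$ such that $|f_\alpha(y)|\le g_\beta$ for all $\alpha\ge\alpha_\beta$ and all $y$ with $|y|\le|x|$; once this is done we will have $\mathcal{T}_x(f_\alpha)\convco 0$, hence $\mathcal{T}_x\in\text{\bf L}_{oc}(E^\sim,\mathbb{R})$.

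The natural candidate is $g_\beta:=h_\beta(|x|)$. To obtain the domination I would chain three standard facts: the pointwise estimate $|f_\alpha(y)|\le|f_\alpha|(|y|)$, valid for every $f_\alpha\in E^\sim$ and $y\in E$; the monotonicity of the positive functional $|f_\alpha|$, which gives $|f_\alpha|(|y|)\le|f_\alpha|(|x|)$ whenever $|y|\le|x|$; and the relation $|f_\alpha|\le h_\beta$ evaluated at $|x|\in E_+$, which yields $|f_\alpha|(|x|)\le h_\beta(|x|)$ for $\alpha\ge\alpha_\beta$. Together these give $|f_\alpha(y)|\le h_\beta(|x|)=g_\beta$ uniformly over all admissible $y$ once $\alpha\ge\alpha_\beta$, which is exactly the uniform bound required.

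It remains to verify $g_\beta\downarrow 0$ in $\mathbb{R}$. Monotonicity is immediate: $\beta_1\le\beta_2$ gives $h_{\beta_1}-h_{\beta_2}\ge 0$, so $h_{\beta_1}(|x|)\ge h_{\beta_2}(|x|)$. The genuine content, and the step I expect to be the main obstacle, is that $\inf_\beta h_\beta(|x|)=0$, i.e. that the order infimum $h_\beta\downarrow 0$ formed in $E^\sim$ is reflected at the positive element $|x|$. This is the classical order continuity of point evaluations on the order dual (equivalently, the canonical image of $E$ lies in the order continuous dual of $E^\sim$), which I would either quote from the standard references or establish directly as follows. Set $\phi(u):=\inf_\beta h_\beta(u)$ for $u\in E_+$; since the net is directed downward one checks $\phi(u+v)=\phi(u)+\phi(v)$ on $E_+$, so $\phi$ extends to a positive, hence order bounded, functional $\phi\in E^\sim$ satisfying $\phi\le h_\beta$ for every $\beta$. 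As $h_\beta\downarrow 0$ in $E^\sim$, the greatest lower bound of the net is $0$, forcing $\phi\le 0$; combined with $\phi\ge 0$ this gives $\phi=0$, whence $\inf_\beta h_\beta(|x|)=\phi(|x|)=0$. Thus $g_\beta\downarrow 0$, and the proof closes by Definition~\ref{COCS and co}.
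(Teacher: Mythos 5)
Your proof is correct and follows essentially the same route as the paper: the same candidate net $g_\beta=h_\beta(|x|)$, the same Riesz--Kantorovich chain $|f_\alpha(y)|\le|f_\alpha|(|y|)\le|f_\alpha|(|x|)\le h_\beta(|x|)$, and the same conclusion. The only difference is that you explicitly verify the classical fact that infima of downward directed nets of positive functionals in $E^\sim$ are computed pointwise (so $h_\beta\downarrow 0$ forces $h_\beta(|x|)\downarrow 0$), which the paper simply asserts as known.
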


\begin{proof}\
Let $x\in E$. In order to show that ${\cal T}_x$ is collectively order continuous, 
let $f_\alpha\convo 0$ in $E^\sim$ and pick $g_\beta\downarrow 0$ in $E^\sim$ 
so that, for each $\beta$ there is an $\alpha_\beta$ with $|f_\alpha|\le g_\beta$ for
$\alpha\ge\alpha_\beta$. By the Riesz-Kantorovich formula,
$|y^\sim(f_\alpha)|=|f_\alpha(y)|\le|f_\alpha|(|y|)\le|f_\alpha|(|x|)\le g_\beta(|x|)$  
for all $|y|\le|x|$ and $\alpha\ge\alpha_\beta$. Since $g_\beta(|x|)\downarrow 0$, then
${\cal T}_x(f_\alpha)=\{(y^\sim(f_\alpha)): |y|\le|x|\}\convco 0$. 
As $f_\alpha\convo 0$ is arbitrary, we conclude ${\cal T}_x\in\text{\bf L}_{oc}(E^\sim,\mathbb{R})$.
\end{proof}

\noindent
Let $E^\delta$ be a Dedekind completion of a VL $E$, $F$ be a Dedekind complete VL, and $T\in({\cal L}_{oc})_+(E,F)$.
Define $T^\delta\in{\cal L}_{oc}(E^\delta,F)$ as the unique linear extension of an additive map $E_+\ni y\to\sup\limits_{y\ge x\in E}Tx$.
The mapping $({\cal L}_{oc})_+(E,F)\ni T\to T^\delta$
has a unique extension to a Riesz isomorphism of ${\cal L}_{oc}(E,F)$ onto ${\cal L}_{oc}(E^\delta,F)$
(see, for example, \cite[Theorem\,1.84]{AB2003}, \cite[Theorem\,3.2.3]{Kus2000}).  

\begin{proposition}\label{appl12c}
Let a VL $E$ be Archimedean, a VL $F$ Dedekind complete, and ${\cal T}\subseteq{\cal L}_+(E,F)$.
Then ${\cal T}^\delta\in\text{\bf L}_{oc}(E^\delta,F)\Longleftrightarrow{\cal T}\in\text{\bf L}_{oc}(E,F)$.
\end{proposition}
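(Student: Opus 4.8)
The plan is to prove the equivalence by two implications, the reverse one being routine and the forward one carrying all the substance. Throughout I would use the standard structural facts about the Dedekind completion: $E$ sits in $E^\delta$ as an order dense and majorizing sublattice, hence as a regular sublattice, so that infima which exist in $E$ are computed the same way in $E^\delta$; that each $T^\delta$ is positive; and that $T^\delta$ restricts to $T$ on $E$. Note that for this direction I will \emph{not} need the individual order continuity of the $T^\delta$, only their positivity.

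For $\mathcal{T}^\delta\in\text{\bf L}_{oc}(E^\delta,F)\Rightarrow\mathcal{T}\in\text{\bf L}_{oc}(E,F)$ the argument is short. Given $x_\alpha\convo 0$ in $E$ with a witness $g_\beta\downarrow 0$ in $E$, regularity of $E$ keeps $g_\beta\downarrow 0$ in $E^\delta$, so the same net witnesses $x_\alpha\convo 0$ in $E^\delta$. Applying the hypothesis and $T^\delta|_E=T$ gives $\mathcal{T}(x_\alpha)=\mathcal{T}^\delta(x_\alpha)\convco 0$, as wanted.

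For the forward implication I would first reduce, then construct. Take $z_\alpha\convo 0$ in $E^\delta$ with a witness $g_\beta\downarrow 0$ in $E^\delta$ and $|z_\alpha|\le g_\beta$ for $\alpha\ge\alpha_\beta$. Positivity gives $|T^\delta z_\alpha|\le T^\delta g_\beta$ on tails, so after a routine index-chaining (take $\hat\alpha_\gamma=\alpha_{\beta_\gamma}$) it suffices to show $\{(T^\delta g_\beta)_\beta\}_{T\in\mathcal{T}}\convco 0$ for an arbitrary decreasing net $g_\beta\downarrow 0$ in $E^\delta$. The key step is to manufacture a genuine order-null net in $E$. Fixing $\beta_0$ and, by the majorizing property, some $u\in E_+$ with $u\ge g_{\beta_0}$, I restrict to $\beta\ge\beta_0$, so $0\le g_\beta\le u$. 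Order density gives $u-g_\beta=\sup\{x\in E:0\le x\le u-g_\beta\}$, which after the substitution $y=u-x$ says that $Y_\beta:=\{y\in E:g_\beta\le y\le u\}$ is downward directed with $\inf Y_\beta=g_\beta$ in $E^\delta$. I then index a net by $\Lambda=\{(\beta,y):\beta\ge\beta_0,\ y\in Y_\beta\}$, ordered by $(\beta_1,y_1)\le(\beta_2,y_2)$ iff $\beta_1\le\beta_2$ and $y_1\ge y_2$, with $\nu(\beta,y)=y$; directedness follows by taking $\beta_3\ge\beta_1,\beta_2$ and $y_3=y_1\wedge y_2$. The net $\nu$ is decreasing with range $\bigcup_{\beta\ge\beta_0}Y_\beta$, whose infimum in $E^\delta$ is $0$ since $\inf Y_\beta=g_\beta\downarrow 0$; regularity then forces the only $E$-lower bound to be $0$, so $\nu\downarrow 0$ in $E$ and $\nu\convo 0$. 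Feeding $\nu$ into $\mathcal{T}\in\text{\bf L}_{oc}(E,F)$ yields $h_\gamma\downarrow 0$ in $F$ and indices $\lambda_\gamma=(\beta_\gamma,y_\gamma)\in\Lambda$ with $|Ty|\le h_\gamma$ whenever $(\beta,y)\ge\lambda_\gamma$. The payoff is immediate: for $\beta\ge\beta_\gamma$ one has $g_\beta\le g_{\beta_\gamma}\le y_\gamma\le u$, so $(\beta,y_\gamma)\in\Lambda$ lies above $\lambda_\gamma$, giving $Ty_\gamma\le h_\gamma$, and then $0\le T^\delta g_\beta\le T^\delta y_\gamma=Ty_\gamma\le h_\gamma$ for all $T$, which is exactly $\{(T^\delta g_\beta)_\beta\}_T\convco 0$.

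The hard part will be precisely the construction of $\nu$ on $\Lambda$: order convergence in $E$ demands witnessing nets living inside $E$, whereas the dominating nets $g_\beta$ naturally sit in $E^\delta\setminus E$, and collective (as opposed to single-operator) continuity requires the estimate to be uniform over $\mathcal{T}$. The trick that resolves this is the simultaneous use of order density (to trap each $g_\beta$ from above by $E$-elements inside $[0,u]$), the majorizing property (to obtain $u$), and regularity (to transport the infimum $0$ back into $E$); together these let mere positivity of the $T^\delta$ push the single collective witness $h_\gamma$ across the completion.
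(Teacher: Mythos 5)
Your proof is correct and follows essentially the same route as the paper's: the easy implication rests on regularity, and the hard one on replacing the dominating net in $E^\delta$ by a net in $E$ decreasing to zero (via majorizing plus order density) and then using positivity of ${\cal T}$ to transfer the single collective witness net in $F$. Your explicit $\Lambda$-construction of $\nu$ is precisely the detailed justification of the step the paper compresses into ``we may assume that $(y_\beta)$ is contained in $E$''.
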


\begin{proof}\
The regularity of $E$ in $E^\delta$ (cf., \cite[Theorems 1.23 and 1.41]{AB2003}) provides the implication
${\cal T}^\delta\in\text{\bf L}_{oc}(E^\delta,F)\Longrightarrow{\cal T}\in\text{\bf L}_{oc}(E,F)$.

Now let ${\cal T}\in\text{\bf L}_{oc}(E,F)$ and $x_\alpha\convo 0$ in $E^\delta$.
Then, there is $y_\beta\downarrow 0$ in $E^\delta$ such that, for each $\beta$ there exists $\alpha_\beta$
with $|x_\alpha|\le y_\beta$ for all $\alpha\ge\alpha_\beta$. Since $E$ is majorizing in $E^\delta$ (cf., \cite[Theorem 1.41]{AB2003}),
we may assume that $(y_\beta)$ is contained in $E$. Trivially, $y_\beta\downarrow 0$ in $E$.
Since ${\cal T}\in\text{\bf L}_{oc}(E,F)$, we have $\{(Ty_\beta)\}_{T\in{\cal T}}\convco 0$, and hence
there is a net $g_\gamma\downarrow 0$ in $F$ s.t., for each $\gamma$, 
there is $\beta_\gamma$ with $|Ty_\beta|\le g_\gamma$ for all $\beta\ge\beta_\gamma$ and $T\in{\cal T}$.  
As ${\cal T}\subseteq{\cal L}_+(E,F)$, we conclude 
$|T^\delta x_\alpha|\le T^\delta|x_\alpha|\le T^\delta y_{\beta_\gamma}=Ty_{\beta_\gamma}\le g_\gamma$ 
for all $\alpha\ge\alpha_{\beta_\gamma}$ and $T\in{\cal T}$. Thus $\{(T^\delta x_\alpha)\}_{T\in{\cal T}}\convco 0$,
and hence ${\cal T}^\delta\in\text{\bf L}_{oc}(E^\delta,F)$.
\end{proof}

\subsection{\large Conditions for collectively order boundedness.}
It is long known that every \text{\rm o}-continuous operator between VLs is order bounded (see, e.g., \cite[Lemma 1.72]{AB2003}) whenever
the order convergence is understood in the sense of \cite[Definition 1.12]{AB2003} i.e., $x_\alpha\convoo x$ in a VL $E$
if there exists a net $g_\alpha\downarrow 0$ in $E$ s.t. $|x_\alpha-x|\le g_\alpha$ for all $\alpha$. 
Abramovich and Sirotkin proved in \cite{AS2005} that every \text{\rm o}-continuous operator from an Archimedean VL
to a VL is order bounded also if the order convergence is understood in the sense
of Definition \ref{order convergence and co}. The Archimedean assumption is essential in \cite[Theorem 2.1]{AS2005}
(see \cite[Example 2.1]{EEG2024}).

A short looking at the proof of \cite[Theorem 2.1]{AS2005} tell us that Abra\-mo\-vich and Sirotkin proved indeed that  
every \text{\rm ru}-continuous operator between arbitrary two VLs $E$ and $F$ is order bounded. Conversely, 
each $T\in{\cal L}_{ob}(E,F)$ is \text{\rm ru}-continuous directly by Definition \ref{order convergence and co}~$c)$
because, for every $u\in E_+$ there is $w\in F$ with $T[-u,u]\subseteq[-w,w]$, and hence 
$|x_\alpha|\le\frac{1}{n}u\Longrightarrow|Tx_\alpha|\le\frac{1}{n}w$.

The fact that $T\in{\cal L}_{ob}(E,F)$ iff $T$ is \text{\rm ru}-continuous was independently rediscovered by 
Taylor and Troitsky in \cite[Theorem 5.1]{TT2020}. It is worth mentioned that this fact can also be derived from the nonstandard criteria 
of order boundednees  \cite[Theorem 1.7.2]{E1996} (cf., \cite[Theorem 4.6.2]{E2000}).

We extend the Abramovich--Sirotkin--Taylor--Troitsky result as follows with the key idea of proof coming from \cite{AS2005}. 

\begin{theorem}\label{cocon imlies obdd}
Let $E$ and $F$ be vector lattices with $E$ Archimedean. The following statements hold.
\begin{enumerate}[$i)$]
\item
Let ${\cal T}\subseteq{\cal L}(E,F)$.
If ${\cal T}x_\alpha\convco 0$ whenever $x_\alpha\downarrow 0$, then 
${\cal T}\in\text{\bf L}_{ob}(E,F)$.
In particular, $\text{\bf L}_{oc}(E,F)\subseteq\text{\bf L}_{ob}(E,F)$.
\item
If $\dim(F)<\infty$ then $\text{\bf L}_{uoc}(E,F)\subseteq\text{\bf L}_{ob}(E,F)$.
\end{enumerate}
Moreover, $\text{\bf L}_{ob}(E,F)=\text{\bf L}_{rc}(E,F)$ for all vector lattices $E$ and $F$.
\end{theorem}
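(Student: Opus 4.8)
The plan is to establish the ``Moreover'' equality $\text{\bf L}_{ob}=\text{\bf L}_{rc}$ first (it holds for all vector lattices and carries the main idea), then to obtain $i)$ by an Archimedean refinement, and finally to reduce $ii)$ to $i)$. For $\text{\bf L}_{ob}\subseteq\text{\bf L}_{rc}$ I would argue directly: given ${\cal T}\in\text{\bf L}_{ob}(E,F)$ and $x_\alpha\convr 0$ with regulator $u$ and indices $(\alpha_n)$, so $|x_\alpha|\le\frac1n u$ for $\alpha\ge\alpha_n$, pick $w$ with ${\cal T}[0,u]\subseteq[-w,w]$. Writing $x_\alpha=x_\alpha^+-x_\alpha^-$ with $x_\alpha^\pm\in[0,\frac1n u]$, we have $nx_\alpha^\pm\in[0,u]$, hence $|Tx_\alpha^\pm|\le\frac1n w$ and $|Tx_\alpha|\le\frac2n w$ for all $T\in{\cal T}$ and $\alpha\ge\alpha_n$; thus ${\cal T}(x_\alpha)\convcr 0$ with regulator $2w$. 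This is just the collective form of the elementary observation recalled before the theorem.

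For the reverse inclusion $\text{\bf L}_{rc}\subseteq\text{\bf L}_{ob}$ I argue by contraposition, and here is the device I expect to do most of the work. If ${\cal T}\notin\text{\bf L}_{ob}(E,F)$, fix $b\in E_+$ with ${\cal T}[0,b]$ not order bounded and consider the net indexed by $A=\mathbb N\times[0,b]$, preordered by $(n,v)\le(n',v')\iff n\le n'$, with $x_{(n,v)}:=v/n$. Since $|x_{(n,v)}|\le\frac1n b$, one checks $x_\alpha\convr 0$ (regulator $b$, indices $(k,0)$). If ${\cal T}(x_\alpha)\convcr 0$, then at its first level there are $w\in F_+$ and an index $(m,v_1)$ with $|Tx_{(n,v)}|\le w$ for all $(n,v)\ge(m,v_1)$ and all $T$; taking $n=m$ gives $|Tv|\le mw$ for every $v\in[0,b]$ and $T\in{\cal T}$, i.e.\ ${\cal T}[0,b]\subseteq[-mw,mw]$ is order bounded, a contradiction. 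The point of this ``universal'' net is that its tails run over all of $[0,b]$ and all $T\in{\cal T}$ at once, so it detects the unboundedness whether it comes from varying $v$ or from the family ${\cal T}$; note it uses no Archimedean hypothesis, matching the scope of the statement.

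For $i)$ I again argue by contraposition, but now I need a net that is genuinely $\downarrow 0$. The inclusion $\text{\bf L}_{oc}\subseteq\text{\bf L}_{ob}$ alone is easy: the same $x_{(n,v)}=v/n$ satisfies $x_\alpha\convo 0$ (via $\frac1n b\downarrow 0$, where Archimedeanness of $E$ enters), so if ${\cal T}\in\text{\bf L}_{oc}$ the identical first-level argument forces ${\cal T}[0,b]$ order bounded. The difficulty in $i)$ is that the hypothesis tests only decreasing nets, whereas $x_{(n,v)}=v/n$ is not decreasing. Testing the admissible nets $(c/n)_n\downarrow 0$ for fixed $c$ first shows that $\{Tc:T\in{\cal T}\}$ is order bounded for every single $c\in[0,b]$; the remaining task is to upgrade this ``pointwise'' boundedness to uniform boundedness over $[0,b]$. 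This is the step I expect to be the main obstacle: following Abramovich--Sirotkin, one extracts from the failure of order boundedness a sequence of normalized test elements and assembles them—using the Archimedean scaling $\frac1N b\downarrow 0$ to dominate the relevant suprema, passing if necessary through the Dedekind completion $E^\delta$, where those suprema exist, and descending to $E$ by order density/majorization—into a single net $x_\alpha\downarrow 0$ whose image ${\cal T}(x_\alpha)$ admits no order regulator $g_\beta\downarrow 0$. Reconciling the monotonicity $x_\alpha\downarrow 0$ with the requirement that every tail of ${\cal T}(x_\alpha)$ remain order-unbounded is the delicate heart of the argument.

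Finally $ii)$ reduces to $i)$. If ${\cal T}\in\text{\bf L}_{uoc}(E,F)$ with $\dim F<\infty$ and $x_\alpha\downarrow 0$ in $E$, then $x_\alpha\convuo 0$, so ${\cal T}(x_\alpha)\convcuo 0$; and in a finite-dimensional $F\cong\mathbb R^d$ with strong order unit $u$, collective uo-convergence forces collective o-convergence. Indeed, given $\{(|Tx_\alpha|\wedge u)\}\convco 0$ with regulator $g_\beta\downarrow 0$, for all large $\beta$ one has $(g_\beta)_i<u_i$ in each of the finitely many coordinates $i$, and there $|Tx_\alpha|\wedge u\le g_\beta$ forces $|Tx_\alpha|\le g_\beta$ coordinatewise; such $\beta$ are cofinal, so ${\cal T}(x_\alpha)\convco 0$. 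Hence ${\cal T}(x_\alpha)\convco 0$ for every $x_\alpha\downarrow 0$, and $i)$ yields ${\cal T}\in\text{\bf L}_{ob}(E,F)$.
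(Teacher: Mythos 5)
Your ``Moreover'' equality and the inclusion $\text{\bf L}_{oc}(E,F)\subseteq\text{\bf L}_{ob}(E,F)$ are proved correctly, and essentially by the paper's own method: the Abramovich--Sirotkin net $x_{(n,v)}=v/n$ on $\mathbb{N}\times[0,b]$, plus the observation that one whole level of any collective tail already covers $\frac{1}{m}[0,b]$; your simpler preorder (comparing first coordinates only), the contrapositive phrasing, and the $x_\alpha^{\pm}$ decomposition in $\text{\bf L}_{ob}\subseteq\text{\bf L}_{rc}$ are cosmetic variations on the paper's lexicographic order and direct argument. The genuine gap is in part $i)$, and it sits exactly where you placed it: the hypothesis of $i)$ concerns only \emph{decreasing} nets, the net $v/n$ is not decreasing, and your proposed remedy (pointwise bounds, then an unspecified construction through $E^\delta$) is a plan, not a proof. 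So you have established the ``In particular'' clause of $i)$ but not $i)$ itself; consequently your part $ii)$, which invokes the full strength of $i)$, is also left hanging as written.

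Two further comments. First, your instinct is sounder than you may realize: the paper's own proof asserts that $x_{(m,y)}\convr 0$ ``and hence $x_{(m,y)}\downarrow 0$ because $E$ is Archimedean,'' which is not true --- within a lexicographic level the net \emph{increases} ($x_{(n,z)}\ge x_{(n,y)}$ for $z\ge y$), and ru-convergence never forces monotonicity --- so the paper, too, really proves only the o-convergence version. The statement $i)$ is nevertheless true, and the missing step has a much shorter fix than your sketch: on the lexicographically ordered $\mathbb{N}\times[0,b]$ take $w_{(n,y)}:=\frac{1}{n}b-\frac{1}{n(n+1)}y$. Since $w_{(n,y)}\ge\frac{1}{n+1}b\ge w_{(m,z)}$ whenever $m>n$, and $w_{(n,z)}\le w_{(n,y)}$ when $z\ge y$, this net is decreasing, and $\inf_{(n,y)}w_{(n,y)}=0$ by Archimedeanness; applying the hypothesis, fixing $\beta_0$ and setting $N=m_{\beta_0}+1$, the choice $y=0$ gives $\frac{1}{N}|Tb|\le g_{\beta_0}$ for all $T\in{\cal T}$, and then for every $y\in[0,b]$ one gets $\frac{1}{N(N+1)}|Ty|\le|Tw_{(N,y)}|+\frac{1}{N}|Tb|\le 2g_{\beta_0}$, i.e.\ ${\cal T}[0,b]\subseteq[-2N(N+1)g_{\beta_0},\,2N(N+1)g_{\beta_0}]$. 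Second, in $ii)$ you can bypass the full $i)$ entirely: o-null nets are uo-null, so ${\cal T}\in\text{\bf L}_{uoc}(E,F)$ with $\dim F<\infty$ yields ${\cal T}\in\text{\bf L}_{oc}(E,F)$, and the ``In particular'' clause you did prove applies. But note that your coordinatewise argument presupposes $F\cong\mathbb{R}^d$, i.e.\ $F$ Archimedean, whereas the theorem allows an arbitrary finite-dimensional $F$ (e.g.\ the lexicographic plane); this is why the paper argues abstractly with a strong unit $w$, using that $|Tx|\wedge w\le g_\beta\le\frac{w}{2}$ forces $|Tx|\le g_\beta$, because $\big(|Tx|-\frac{w}{2}\big)^+$ is then disjoint from the strong unit $w$ and hence zero.
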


\begin{proof}
$i)$\
Let $[0,b]$ be an order interval in $E$. 
Like in the proof of \cite[Theorem 2.1]{AS2005}, let ${\cal I}=\mathbb{N}\times [0,b]$ be 
a set directed with the lexicographical order: $(m,z)\ge(n,y)$ iff either $m>n$
or $m=n$ and $z\ge y$. Let $x_{(k,y)}=\frac{1}{k}y\in [0,b]$.
Since $0\le x_{(m,y)}=\frac{1}{m}y\le\frac{1}{n}b=x_{(n,b)}$ for $(m,y)\ge(n,b)$
then $x_{(m,y)}\convr 0$, and hence $x_{(m,y)}\downarrow 0$ because $E$ is Archimedean.

By the assumption, there exists a net $g_\beta\downarrow 0$ such that, 
for every $\beta$ there exists $(m_\beta,y_\beta)$ satisfying
$|T x_{(m,y)}|\le g_\beta$ for all $(m,y)\ge(m_{\beta},y_{\beta})$ and $T\in{\cal T}$.
Pick any $g_{\beta_0}$.  Since $(m_{\beta_0}+1,y)\ge(m_{\beta_0},y_{\beta_0})$, it follows
$$
   \Big|T\Big(\frac{y}{m_{\beta_0}+1}\Big)\Big|=\big|Tx_{(m_{\beta_0}+1,y)}\big|\le g_{\beta_0}\ \ \ 
   (y\in[0,b],T\in{\cal T}).
$$ 
Then $|Ty|\le(m_{\beta_0}+1)g_{\beta_0}$ for
$y\in[0, b]$ and $T\in{\cal T}$. Since $[0,b]$
is arbitrary, the set ${\cal T}$ is collective order bounded.

\medskip
$ii)$\
Let $[0,b]\subset E$.
Take a directed set ${\cal I}$ as in the proof of $i)$,
and consider a net $x_{(k,y)}=\frac{1}{k}y$ in $[0,b]$.
Then $x_{(m,y)}\convr 0$, and hence $x_{(m,y)}\convuo 0$ as $E$ is Archimedean.
Because of $\dim(F)<\infty$, $F$ has a strong unit, say $w$. 
Since ${\cal T}\in\text{\bf L}_{uoc}(E,F)$, there exists 
a net $g_\beta\downarrow 0$ in $F$ such that, 
for every $\beta$, there exists $(m_\beta,y_\beta)$ with
$|Tx_{(m,y)}|\wedge w\le g_\beta$ for all 
$(m,y)\ge(m_{\beta},y_{\beta})$ and $T\in{\cal T}$.
As $\dim(F)<\infty$, by passing to a tail, we may assume 
$g_\beta\le\frac{w}{2}$ for all $\beta$.
So, $|Tx_{(m,y)}|\le g_\beta$ for $(m,y)\ge(m_{\beta},y_{\beta})$ and $T\in{\cal T}$.
Pick any $\beta_0$. Since $(m_{\beta_0}+1,y)\ge(m_{\beta_0},y_{\beta_0})$,
$$
   \Big|T\Big(\frac{y}{m_{\beta_0}+1}\Big)\Big|=
   \big|Tx_{(m_{\beta_0}+1,y)}\big|\le g_{\beta_0}\ \ \ 
   (y\in[0, b],T\in{\cal T}).
$$ 
Thus, $|T(y)|\le(m_{\beta_0}+1)g_{\beta_0}$ for all $y\in[0, b]$ and $T\in{\cal T}$,
and hence ${\cal T}\in\text{\bf L}_{ob}(E,F)$.

\medskip
Now, let $E$ and $F$ be arbitrary vector lattices.

\medskip
Let ${\cal T}\in\text{\bf L}_{ob}(E,F)$ and $x_\alpha\convr 0$.
Then, for some $u\in E_+$, there exists an increasing sequence 
$(\alpha_n)$ of indices with $nx_\alpha\in[-u,u]$ for all $\alpha\ge\alpha_n$.
Since ${\cal T}\in\text{\bf L}_{ob}(E,F)$ then ${\cal T}[-u,u]\subseteq[-w,w]$
for some $w\in F_+$, and hence $n{\cal T}x_\alpha\subseteq[-w,w]$ for $\alpha\ge\alpha_n$.
It follows ${\cal T}\in\text{\bf L}_{rc}(E,F)$.

Finally, let ${\cal T}\in\text{\bf L}_{rc}(E,F)$ and $[0,b]\subset E$. 
Take a directed set ${\cal I}$ and a net $x_{(k,y)}\convr 0$ as in the proof of $i)$. 
Then, for some $u\in E_+$ there exists an increasing sequence $(k_n,y_n)$ in ${\cal I}$
with $\big|T\big(\frac{y}{k}\big)\big|=|Tx_{(k,y)}|\le\frac{1}{n}u$ for
all $(k,y)\ge(k_n,y_n)$ and $T\in{\cal T}$. In particular,
$\big|T\big(\frac{y}{k_1+1}\big)\big|=\big|Tx_{(k_1+1,y)}\big|\le u$ 
for all $y\in[0, b]$ and all $T\in{\cal T}$.
This implies that $|Ty|\le(k_1+1)u$ for
for all $y\in[0, b]$ and all $T\in{\cal T}$. Since $[0,b]$
is arbitrary, ${\cal T}\in\text{\bf L}_{ob}(E,F)$.
\end{proof}

It is an open question, whether the assumption $\dim(F)<\infty$
can be dropped in Theorem\,\ref{cocon imlies obdd}\,$ii)$.
In favor of dropping this assumption, we have a relatively easy
observation that each of non order bounded operators in the textbooks 
\cite{AB2003}, \cite{AB2006}, and \cite{Za1983}
(see, \cite[Exercise 30; p.\,48]{AB2003},
\cite[Examples 2.38 and 4.73]{AB2006},
\cite[Exercise 98.7]{Za1983}) is not uo-continuous.

\medskip
From the other hand, it is well known that a positive functional need not to be uo-continuous;
e.g., $f\in(\ell^1)'$, $f({\bf x})=\sum_{k=1}^\infty x_k$,
for ${\bf x}=\sum_{k=1}^\infty x_k {\bf e}_k\in\ell^1$, where ${\bf e}_k$ is
the k-th unit vector of $\ell^1$, is not uo-continuous as $f({\bf e}_n)\equiv 1$ 
despite ${\bf e}_n\convuo 0$ in $\ell^1$.

\medskip
In the Banach lattice setting, Theorem \ref{cocon imlies obdd} has the following
application to extension of the well known fact that each o-con\-tinuous
operator from a Banach lattice to a normed lattice is continuous.

\begin{theorem}\label{appl2}
Let ${\cal T}\in\text{\bf L}_{oc}(E,F)$, where $E$ is a Banach lattice and $F$ a normed lattice. 
Then the set ${\cal T}$ is norm bounded.
\end{theorem}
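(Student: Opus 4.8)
The plan is to combine the two main results already established in the excerpt. Theorem~\ref{cocon imlies obdd}~$i)$ gives the inclusion $\text{\bf L}_{oc}(E,F)\subseteq\text{\bf L}_{ob}(E,F)$, and Theorem~\ref{appl1} shows that every collectively order to norm bounded set of operators from a Banach lattice to a normed space is bounded in the operator norm. So the strategy is to pass from collective order continuity, through collective order boundedness, to collective order to norm boundedness, and then invoke Theorem~\ref{appl1}.

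First I would note that since $E$ is a Banach lattice it is Archimedean, so the hypotheses of Theorem~\ref{cocon imlies obdd} are met. Applying part~$i)$ to the collectively order continuous set ${\cal T}$ yields ${\cal T}\in\text{\bf L}_{ob}(E,F)$; that is, for every $b\in E_+$ the set ${\cal T}[0,b]$ is order bounded in $F$. Next I would upgrade this to collective order to norm boundedness. Here is where the normed lattice structure of $F$ enters: in a normed lattice, order intervals are norm bounded, because $0\le y\le w$ implies $\|y\|\le\|w\|$ by monotonicity of the lattice norm, and more generally an order bounded set $S\subseteq[-w,w]$ satisfies $\|s\|\le\||s|\|\le\|w\|$ for all $s\in S$. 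Thus for each $b\in E_+$ the order boundedness of ${\cal T}[0,b]$ forces it to be norm bounded, which is exactly the statement ${\cal T}\in\text{\bf L}_{onb}(E,F)$.

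Finally, with ${\cal T}\in\text{\bf L}_{onb}(E,F)$ in hand and $E$ a Banach lattice, $F$ a normed lattice (in particular a normed space), I would apply Theorem~\ref{appl1} directly to conclude that ${\cal T}$ is bounded in the operator norm, hence norm bounded as claimed.

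The only step that requires genuine care, rather than a one-line citation, is the passage from order boundedness to norm boundedness of ${\cal T}[0,b]$, and this hinges on the defining property of a lattice norm (monotonicity on the positive cone). This is routine but essential, since it is the place where the hypothesis ``$F$ a normed lattice'' (rather than merely a normed space) is used; without it the implication $\text{\bf L}_{ob}\subseteq\text{\bf L}_{onb}$ would fail. I do not anticipate any real obstacle: the theorem is essentially a corollary obtained by chaining Theorem~\ref{cocon imlies obdd}~$i)$ and Theorem~\ref{appl1} together with the elementary monotonicity of the norm.
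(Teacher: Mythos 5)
Your proof is correct and follows exactly the paper's own route: Theorem~\ref{cocon imlies obdd}~$i)$ gives ${\cal T}\in\text{\bf L}_{ob}(E,F)$, the monotonicity of the lattice norm on $F$ upgrades this to ${\cal T}\in\text{\bf L}_{onb}(E,F)$, and Theorem~\ref{appl1} finishes. The only difference is that you spell out the passage from $\text{\bf L}_{ob}$ to $\text{\bf L}_{onb}$, which the paper leaves implicit in the words ``and hence.''
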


\begin{proof}
By Theorem \ref{cocon imlies obdd}, ${\cal T}\in\text{\bf L}_{ob}(E,F)$,
and hence ${\cal T}\in\text{\bf L}_{onb}(E,F)$.
An application of Theorem \ref{appl1} completes the proof.
\end{proof}

\bigskip
\bigskip

{\normalsize 
}
\end{document}